\documentclass[12pt]{amsart}
\usepackage[utf8]{inputenc}
\usepackage{graphicx}
\usepackage{caption}
\usepackage{subcaption}
\usepackage{booktabs}
\usepackage{epstopdf}
\usepackage{inputenc}
\usepackage{tikz}
\usepackage{float}
\usepackage{enumitem}
\usepackage{amsmath,fullpage}
\usepackage{amssymb}
\usepackage{emptypage}
\usepackage{mathtools}
\usepackage[T1]{fontenc}
\usepackage[utf8]{inputenc}
\usepackage{lmodern}
\usepackage{geometry}
\usepackage{amsmath,amssymb,amsthm}
\usepackage{mathtools}
\usepackage{microtype}
\usepackage{enumitem}
\usepackage{hyperref}
\usepackage{xcolor}
\newcommand{\rev}[1]{#1}

\newtheorem*{theorem*}{Theorem}

\newtheorem{lemma}{Lemma}

\newtheorem*{acknowledgements*}{Acknowledgements}
\newtheorem{remark}{Remark}

\newenvironment{revision}
  {}
  {}

\makeatletter
\def\blfootnote{\gdef\@thefnmark{}\@footnotetext}
\makeatother

\def\house#1{\setbox1=\hbox{$\,#1\,$}%
\dimen1=\ht1 \advance\dimen1 by 2pt \dimen2=\dp1 \advance\dimen2 by 2pt
\setbox1=\hbox{\vrule height\dimen1 depth\dimen2\box1\vrule}%
\setbox1=\vbox{\hrule\box1}%
\advance\dimen1 by .4pt \ht1=\dimen1
\advance\dimen2 by .4pt \dp1=\dimen2 \box1\relax}

\begin{document}
	\title[Immersed Interface Method for Nonlinear Jump Conditions]
{An Immersed Interface Method for Parabolic Interface Problems
with Nonlinear Jump Conditions}

\author{So-Hsiang Chou}
\address{Department of Mathematics and Statistics,
Bowling Green State University,
Bowling Green, OH 43403, USA}
\email{chou@bgsu.edu}

\author{Patrick Nyadjo Fonga}
\address{Department of Mathematics and Statistics,
Bowling Green State University,
Bowling Green, OH 43403, USA}
\email{nyadjop@bgsu.edu}



\begin{abstract}
We develop an immersed interface finite difference method for a
one-dimensional nonlinear parabolic interface problem with jump condition
\[
[u]_\alpha=\lambda u^+u^-.
\]
The method combines a Crank--Nicolson immersed interface discretization
with an \(s\)-parameter reduction of the nonlinear interface condition,
thereby reducing the nonlinear coupling to a scalar quadratic equation.
We also discuss two different viewpoints for combining Newton iteration
with immersed interface discretization, namely the IIM--Newton and
Newton--IIM formulations.
Numerical experiments are presented to illustrate the behavior and
accuracy of the method.
\end{abstract}
\maketitle

\section{Introduction}

Immersed interface methods (IIM) provide an effective framework for
solving interface problems on unfitted grids while maintaining high-order
accuracy near interfaces.  In classical IIM formulations, the jump
conditions are assumed to be prescribed explicitly, so that correction
terms at irregular grid points can be constructed directly from the
known interface data.  For nonlinear interface conditions, however, the
jump quantities are not known a priori since they depend on the unknown
traces of the solution itself.  As a result, the implementation of
immersed interface corrections becomes substantially more delicate than
in the standard linear setting.

In this paper, we study immersed interface finite difference
approximations for the one-dimensional nonlinear parabolic interface
problem
({\bf NPIP}):
\begin{gather}
{\mathcal L}u:=u_t-(\beta u_x)_x + qu = f,
\qquad
x\in \Omega^-\cup\Omega^+,\quad t\in(0,T],
\label{eq:parabolic_prob1}
\\
u(a,t)=\xi(t),\qquad
u(b,t)=\eta(t),
\label{eq:parabolic_prob2}
\\
[u]_\alpha=\lambda u^+u^-,
\label{eq:parabolic_prob3}
\\
[\beta u_x]_\alpha=0,
\label{eq:parabolic_prob4}
\\
u(x,0)=g(x),
\qquad
x\in \Omega^-\cup\Omega^+.
\label{eq:parabolic_prob5}
\end{gather}
Here $\alpha$ is a fixed interface point,
$\Omega^-=(a,\alpha)$,
$\Omega^+=(\alpha,b)$,
and the diffusion coefficient $\beta=\beta(x)>0$ is piecewise constant:
\[
\beta=
\begin{cases}
\beta^-,
& x\in\Omega^-,
\\
\beta^+,
& x\in\Omega^+.
\end{cases}
\]
We use the standard notation
\[
[u]_\alpha=u^+-u^-,
\qquad
[\beta u_x]_\alpha
=
\beta^+u_x^+
-
\beta^-u_x^-,
\]
where
\[u^+ := \lim_{x\to \alpha^+} u(x,t),
    \qquad
    u^- := \lim_{x\to \alpha^-} u(x,t).\]

The nonlinear interface condition
\(
[u]_\alpha=\lambda u^+u^-
\)
couples the one-sided traces of the solution across the interface and
constitutes the principal nonlinearity of the problem.

Problems of this type arise in diffusion and transport models
involving interfaces, membranes, and heterogeneous media.
Numerical methods for interface problems with discontinuous
coefficients include fitted finite difference methods,
immersed interface methods, immersed finite element methods,
and related unfitted discretizations.
Among these, the immersed interface method introduced by
LeVeque and Li \cite{leveque1994immersed}
provides a systematic way to retain high-order finite difference
accuracy on Cartesian grids by incorporating correction terms
at irregular grid points near the interface.
Subsequent developments of immersed interface methods for
elliptic and parabolic interface problems may be found in
\cite{adams2002iimmg,li1998fem,li2006immersed},
while related immersed finite element approaches are discussed
in \cite{lin2013moving}.

Most immersed interface formulations are developed for interface
problems with prescribed jumps.
For nonlinear interface conditions, however, the interface jump
depends implicitly on the unknown solution traces, which introduces
additional difficulties in constructing immersed interface correction
terms.
A reduction of nonlinear interface conditions to a scalar equation
through an \(s\)-parameter formulation was proposed in
\cite{hetzer2007interface}.
Related fitted finite difference approaches for nonlinear jump
conditions were studied in \cite{Chou2025}.
The present work focuses instead on the immersed interface setting
and investigates how nonlinear jump conditions may be incorporated
into a classical immersed interface discretization.

To handle the nonlinear jump relation, we employ the
\(s\)-parameter reduction introduced in
\cite{hetzer2007interface}. The solution is represented as
\begin{equation}\label{eq:s1}
u=u_0+s u_1.
\end{equation}
Here \(u_0\) satisfies homogeneous interface jumps and \(u_1\)
corresponds to a unit solution jump. The nonlinear interface condition
then reduces to a scalar quadratic equation for the parameter \(s\).
This decomposition separates the nonlinear interface coupling from the
linear bulk equations and allows the immersed interface discretization to
be applied to two auxiliary linear problems.

The purpose of the present paper is not to develop a new immersed
interface stencil for general nonlinear interface problems, but rather
to investigate how nonlinear jump conditions can be incorporated into a
classical immersed interface framework.  In particular, we examine two
different viewpoints for combining Newton iteration with immersed
interface discretization.

The first viewpoint, referred to here as the IIM--Newton approach,
applies the immersed interface discretization first and then performs
Newton iteration on the resulting nonlinear algebraic system.  In this
case, the nonlinearity is localized near the irregular interface grid
points, and the Jacobian corrections remain sparse.

The second viewpoint, referred to as the Newton--IIM approach,
linearizes the nonlinear interface problem first at the continuous
level and then attempts to discretize the resulting correction equation
by the immersed interface method.  We also discuss several practical difficulties associated with
this latter strategy within the standard immersed interface framework.

For the numerical verification tests presented in this paper, the
one-sided interface traces entering the scalar equation for \(s\) are
evaluated from the manufactured exact solution.  This allows the
experiments to isolate the immersed interface discretization and the
nonlinear scalar reduction from additional trace-reconstruction errors.
\begin{revision}
The separate issue of recovering interface traces from unfitted
numerical data is discussed briefly below.  A systematic study of
trace-recovery procedures, including reconstructions that incorporate
the interface jump relations, is the subject of separate work.
\end{revision}

\rev{To expose the nonlinear algebraic structure without the additional
notation associated with time discretization, we first develop the
IIM--Newton formulation for the corresponding stationary elliptic
interface problem.  This problem also serves as a model for the
nonlinear algebraic problem arising at each time level after an
implicit discretization of the parabolic equation.  The parabolic
problem is then treated by combining the Crank--Nicolson immersed
interface discretization with the $s$-parameter reduction.}

The remainder of the paper is organized as follows.
Section~2 derives the immersed interface finite difference approximation
for a related linear interface problem and records the correction terms
at irregular grid points.
\rev{Section~3 presents the IIM--Newton formulation for the stationary
nonlinear elliptic model problem.}
Section~4 contains numerical experiments illustrating the behavior of
the method.
Finally, Section~5 discusses the Newton--IIM formulation and explains
its practical difficulties within the immersed interface framework.

\section{Immersed Interface Discretization for a Linear Problem}
\label{sec:linear-iim}

The purpose of this section is to derive the immersed interface finite
difference approximation for a linear parabolic interface problem with
prescribed jumps.  This linear problem serves as the building block for
the nonlinear problem \({\bf NPIP}\).

Since the correction terms at irregular grid points depend sensitively
on the convention used for the discrete operator, we briefly record the
derivation employed in the present implementation.

We consider the linear parabolic interface problem
\begin{gather}
u_t=(\beta u_x)_x-qu+f,
\qquad
x\in\Omega^-\cup\Omega^+,
\quad
t\in(0,T],
\label{eq:linear_parabolic1}
\\
u(a,t)=\xi(t),
\qquad
u(b,t)=\eta(t),
\label{eq:linear_parabolic2}
\\
[u]_\alpha
=
(1-A(t))u^+
+
(B(t)-1)u^-
+
w(t),
\label{eq:linear_parabolic3}
\\
[\beta u_x]_\alpha
=
\beta^+u_x^+
-
\beta^-u_x^-
=
v(t),
\label{eq:linear_parabolic4}
\\
u(x,0)=g(x),
\qquad
x\in\Omega^-\cup\Omega^+.
\label{eq:linear_parabolic5}
\end{gather}
Equivalently, the jump condition may be written as
\[
A(t)u^+ - B(t)u^- = w(t).
\]
When \(A(t)=B(t)=1\), this reduces to the standard prescribed jump
condition
\[
[u]_\alpha=w(t).
\]

\begin{revision}
We assume that $A,B,w\in C^1([0,T])$, that $v$ is continuous
on $[0,T]$, that
\[
A(t)B(t)\neq0,
\qquad 0\le t\le T,
\]
and that the one-sided limits
\[
f^\pm(t)=\lim_{x\to\alpha^\pm}f(x,t)
\]
exist.  The initial data are assumed to be compatible with the
boundary and interface conditions at $t=0$.
\end{revision}

Let
\[
a=x_0<x_1<\cdots<x_N=b
\]
be a uniform mesh with mesh size \(h\), and assume that the interface
point satisfies
\[
x_j\le \alpha < x_{j+1}.
\]
The grid points \(x_j\) and \(x_{j+1}\) are the irregular points.

For the time discretization, we use the $\theta$-method:
\begin{align}
\frac{U_i^{n+1}-U_i^n}{\Delta t}
&=
\theta
\left(
\gamma_{i,1}U_{i-1}^{n+1}
+
\gamma_{i,2}U_i^{n+1}
+
\gamma_{i,3}U_{i+1}^{n+1}
+
C_i^{n+1}
-
(qU)_i^{n+1}
\right)
\nonumber
\\
&\quad
+
(1-\theta)
\left(
\gamma_{i,1}U_{i-1}^{n}
+
\gamma_{i,2}U_i^{n}
+
\gamma_{i,3}U_{i+1}^{n}
+
C_i^{n}
-
(qU)_i^{n}
\right)
\nonumber
\\
&\quad
+
f(x_i,t_{n+\theta}).
\label{eq:theta_scheme}
\end{align}
Here the coefficients $\gamma_{i,k}$ and the correction term $C_i^n$
depend on whether $x_i$ is a regular or an irregular grid point.

Away from the interface, the standard centered approximation is used.
Thus, for $i\notin\{j,j+1\}$,
\[
\gamma_{i,1}=\frac{\beta_{i-\frac12}}{h^2},
\qquad
\gamma_{i,2}
=
-\frac{\beta_{i-\frac12}+\beta_{i+\frac12}}{h^2},
\qquad
\gamma_{i,3}
=
\frac{\beta_{i+\frac12}}{h^2},
\qquad
C_i^n=0.
\]
Here \(\theta=1\) corresponds to backward Euler, while
\(\theta=1/2\) gives the Crank--Nicolson method.

\begin{revision}
In the fully discrete scheme, the correction terms
$C_i^n$ and $C_i^{n+1}$ are evaluated from the interface data at
$t_n$ and $t_{n+1}$, respectively.  Thus, for the Crank--Nicolson
scheme ($\theta=1/2$), the two correction terms enter through their
trapezoidal average.  The quantities $A_t$, $B_t$, and $w_t$
appearing in the correction formulas below are evaluated at the
corresponding time levels.  When these functions are prescribed
analytically, as in the verification problems considered here, their
derivatives are evaluated directly.
\end{revision}

It remains to determine the immersed interface correction terms at the
irregular grid points.  We derive them by Taylor expansion about the
interface.

\subsection*{Correction term at \(x_j\)}

At the point \(x_j\), we expand
\(u_{j-1}\),
\(u_j\), and
\(u_{j+1}\)
about \(\alpha^-\):
\begin{align}
u_{j-1}
&=
u^-
+
u_x^-(x_{j-1}-\alpha)
+
\frac12u_{xx}^-(x_{j-1}-\alpha)^2
+
\mathcal O(h^3),
\\
u_j
&=
u^-
+
u_x^-(x_j-\alpha)
+
\frac12u_{xx}^-(x_j-\alpha)^2
+
\mathcal O(h^3),
\\
u_{j+1}
&=
u^+
+
u_x^+(x_{j+1}-\alpha)
+
\frac12u_{xx}^+(x_{j+1}-\alpha)^2
+
\mathcal O(h^3).
\end{align}

The interface conditions imply
\[
u^+
=
\frac{Bu^-+w}{A},
\qquad
u_x^+
=
\frac{v+\beta^-u_x^-}{\beta^+}.
\]

Using the equation on both sides of the interface together with the
time derivative of
\[
Au^+-Bu^-=w,
\]
we obtain
\[
u_{xx}^+
=
\frac{B\beta^-}{A\beta^+}u_{xx}^-
+
\left(
\frac{B(q^+-q^-)}{A\beta^+}
+
\frac1{\beta^+}\left(\frac{B}{A}\right)_t
\right)u^-
+
\frac{q^+w-(Af^+-Bf^-)+w_t}{A\beta^+}
-
\frac{A_tw}{\beta^+A^2}.
\]

Matching the coefficients of
\(u^-\),
\(u_x^-\), and
\(u_{xx}^-\)
gives the correction term
\begin{equation}
\label{eq:Cj_corrected}
C_j
=
-\gamma_{j,3}
\left[
\frac{w}{A}
+
\frac{v}{\beta^+}(x_{j+1}-\alpha)
+
\frac{(x_{j+1}-\alpha)^2}{2}
\left(
\frac{q^+w-(Af^+-Bf^-)+w_t}{A\beta^+}
-
\frac{A_tw}{\beta^+A^2}
\right)
\right].
\end{equation}

\subsection*{Correction term at \(x_{j+1}\)}

We next derive the correction term at \(x_{j+1}\).  Expanding about
\(\alpha^+\) gives
\begin{align}
u_j
&=
u^-
+
u_x^-(x_j-\alpha)
+
\frac12u_{xx}^-(x_j-\alpha)^2
+
\mathcal O(h^3),
\\
u_{j+1}
&=
u^+
+
u_x^+(x_{j+1}-\alpha)
+
\frac12u_{xx}^+(x_{j+1}-\alpha)^2
+
\mathcal O(h^3),
\\
u_{j+2}
&=
u^+
+
u_x^+(x_{j+2}-\alpha)
+
\frac12u_{xx}^+(x_{j+2}-\alpha)^2
+
\mathcal O(h^3).
\end{align}

The interface conditions now imply
\[
u^-
=
\frac{Au^+-w}{B},
\qquad
u_x^-
=
\frac{\beta^+u_x^+-v}{\beta^-}.
\]

Proceeding as above yields
\[
u_{xx}^-
=
\frac{A\beta^+}{B\beta^-}u_{xx}^+
+
\left(
-\frac{A(q^+-q^-)}{B\beta^-}
+
\frac1{\beta^-}\left(\frac{A}{B}\right)_t
\right)u^+
+
\frac{
Af^+-Bf^-
-q^-w
-w_t
+\frac{B_t}{B}w
}{B\beta^-}.
\]

Matching the coefficients of
\(u^+\),
\(u_x^+\), and
\(u_{xx}^+\)
gives the correction term
\begin{equation}
\label{eq:Cjp1_corrected}
C_{j+1}
=
\gamma_{j+1,1}
\left[
\frac{w}{B}
+
\frac{v}{\beta^-}(x_j-\alpha)
+
\frac{(x_j-\alpha)^2}{2}
\left(
\frac{
q^-w
-(Af^+-Bf^-)
+w_t
-\frac{B_t}{B}w
}
{B\beta^-}
\right)
\right].
\end{equation}

When \(A=B=1\), the jump condition reduces to
\[
[u]_\alpha=w(t),
\]
and the correction terms simplify to
\[
C_j
=
-\gamma_{j,3}
\left[
w
+
\frac{v}{\beta^+}(x_{j+1}-\alpha)
+
\frac{(x_{j+1}-\alpha)^2}{2}
\frac{q^+w-[f]+w_t}{\beta^+}
\right],
\]
and
\[
C_{j+1}
=
\gamma_{j+1,1}
\left[
w
+
\frac{v}{\beta^-}(x_j-\alpha)
+
\frac{(x_j-\alpha)^2}{2}
\frac{q^-w-[f]+w_t}{\beta^-}
\right].
\]

\begin{revision}
\subsection{Spatial consistency and accuracy}
\label{subsec:spatial-consistency}

We briefly record the spatial consistency of the preceding immersed
interface discretization and explain why the lower local order at the
two irregular grid points is compatible with second-order global
accuracy.  For simplicity, the discussion is given for the
homogeneous forcing case $f=0$; the same local argument applies when
the prescribed forcing is retained.

\begin{lemma}
\label{lem:spatial-consistency}
Fix $t\in[0,T]$, and suppose that the one-sided solution satisfies
\[
u^\pm(\cdot,t)\in C^4(\overline{\Omega^\pm}).
\]
Assume also that the interface data are sufficiently smooth for the
preceding Taylor expansions.  Let $\tau_i$ denote the spatial
truncation error obtained by inserting the exact nodal values into
the immersed interface difference operator, including the correction
terms \eqref{eq:Cj_corrected} and \eqref{eq:Cjp1_corrected}.  Then
\[
\tau_i=\mathcal O(h^2),
\qquad
i\notin\{j,j+1\},
\]
while at the two irregular grid points,
\[
\tau_j=\mathcal O(h),
\qquad
\tau_{j+1}=\mathcal O(h).
\]
\end{lemma}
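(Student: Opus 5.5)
The plan is a direct Taylor-expansion argument, treating regular and irregular points separately. Throughout, the truncation error is taken to be the spatial residual
\[
\tau_i=\gamma_{i,1}u_{i-1}+\gamma_{i,2}u_i+\gamma_{i,3}u_{i+1}+C_i-(\beta u_x)_x(x_i,t),
\]
with $u_i=u(x_i,t)$ exact and $f=0$. The terms $-qu$ and $u_t$ enter pointwise and exactly, so they do not affect $\tau_i$.

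\emph{Regular points.} For $i\notin\{j,j+1\}$ all three nodes $x_{i-1},x_i,x_{i+1}$ lie in the closure of a single subdomain $\Omega^\pm$, where $\beta$ is constant. We use $u^\pm(\cdot,t)\in C^4(\overline{\Omega^\pm})$, and expand $u_{i\pm1}$ about $x_i$ up to fourth order with Lagrange remainder. The odd terms cancel, and $\beta(u_{i-1}-2u_i+u_{i+1})/h^2=\beta u_{xx}(x_i)+\mathcal O(h^2)$ with a constant depending on $\|u^\pm_{xxxx}\|_\infty$. This gives $\tau_i=\mathcal O(h^2)$. The same holds for $i=j-1$ even when $\alpha=x_j$, since $x_j$ then still lies in $\overline{\Omega^-}$ and is treated by the one-sided limit $u^-$.

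\emph{Irregular point $x_j$.} Here $\gamma_{j,1}=\beta^-/h^2$, $\gamma_{j,3}=\beta^+/h^2$ and $\gamma_{j,2}=-(\beta^-+\beta^+)/h^2$ (using $\beta_{j+\frac12}=\beta^+$, or whichever convention the stencil fixes). The steps are as follows.

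First, expand $u_{j-1},u_j$ about $\alpha^-$ and $u_{j+1}$ about $\alpha^+$ to second order, with $\mathcal O(h^3)$ remainders, as displayed before \eqref{eq:Cj_corrected}.

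Second, substitute the expressions for $u^+,u_x^+,u_{xx}^+$ in terms of $u^-,u_x^-,u_{xx}^-$ derived above. That relation for $u_{xx}^+$ follows from the PDE on each side together with the time derivative of $Au^+-Bu^-=w$, and it is exact. After substitution, $\gamma_{j,3}u_{j+1}$ splits into a part linear in $(u^-,u_x^-,u_{xx}^-)$ plus an inhomogeneous part. By construction, the inhomogeneous part is exactly $-C_j$ up to $\gamma_{j,3}\mathcal O(h^3)$, so adding $C_j$ cancels it.

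Third, collect the coefficients of $u^-$, $u_x^-$ and $u_{xx}^-$ in the remaining linear combination. Also expand the target $\beta^-u_{xx}(x_j)=\beta^-u_{xx}^-+\mathcal O(h)$ about $\alpha^-$.

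The main obstacle is to see honestly what survives in this bookkeeping. The stencil weights are the fixed centered ones rather than the undetermined IIM coefficients. Consequently the $u^-$ and $u_x^-$ coefficients need not vanish exactly.

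\begin{itemize}
\item The $u^-$ coefficient is $(\beta^-+\beta^+B/A-\beta^-- \beta^+)/h^2$ plus $\mathcal O(1)$ terms from the $(B/A)_t$ and $q$ contributions. This must be checked against the paper's convention; for $A=B$ it cancels to $\mathcal O(1)$.
\item The $u_x^-$ coefficient is of the form $[\beta^-(x_{j-1}-x_j)+\beta^-(x_{j+1}-\alpha)]/h^2$, which is $\mathcal O(1/h)$ times a combination that reduces to $\mathcal O(h)\cdot\mathcal O(1/h^2)$ after regrouping the distances.
\end{itemize}

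What I would do first is track each coefficient in powers of $h$, using $|x_{j\pm1}-\alpha|\le 2h$ and $|x_j-\alpha|\le h$. I expect every $\mathcal O(h^{-2})$ and $\mathcal O(h^{-1})$ contribution to cancel, given the interface relations and the one-sided PDE identity. The expected residue is then
\begin{itemize}
\item $\mathcal O(h)$ from the mismatch between $u_{xx}^-$ and $u_{xx}(x_j)$, and from the $u_{xxx}$ terms weighted by $h^3/h^2$;
\item $\gamma_{j,3}\cdot\mathcal O(h^3)=\mathcal O(h)$ from the remainders.
\end{itemize}
Together these give $\tau_j=\mathcal O(h)$.

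\emph{Irregular point $x_{j+1}$.} The argument is symmetric. Expand about $\alpha^+$ and eliminate $u^-,u_x^-,u_{xx}^-$ via the relations preceding \eqref{eq:Cjp1_corrected}. The correction $C_{j+1}$ absorbs the inhomogeneous terms, and the same power counting gives $\tau_{j+1}=\mathcal O(h)$.

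If the coefficient cancellation fails at order $h^{-1}$ for general $A\ne B$, I would state the lemma under the stencil convention actually used in the implementation. I would then verify the cancellation symbolically for that convention, since the $\mathcal O(h)$ claim hinges entirely on it.
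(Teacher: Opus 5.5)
\textbf{There is a genuine gap in the irregular-point argument.} Your regular-point argument is fine. At $x_j$, however, you use the centered weights $\gamma_{j,1}=\beta^-/h^2$, $\gamma_{j,2}=-(\beta^-+\beta^+)/h^2$, $\gamma_{j,3}=\beta^+/h^2$, and with these weights the claim is false.

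The paper's scheme does not use centered weights at $x_j,x_{j+1}$. Section~2 says the $\gamma_{i,k}$ depend on whether the point is regular or irregular, and Section~3 calls them the ``modified irregular-point coefficients.'' ``Matching the coefficients of $u^-,u_x^-,u_{xx}^-$'' means choosing $\gamma_{j,1},\gamma_{j,2},\gamma_{j,3}$ so that these three coefficients agree exactly. The correction $C_j$ then only collects the data terms.

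With centered weights the cancellations you expect do not occur. Put $d=x_j-\alpha\in(-h,0]$.
\begin{itemize}
\item For $A\neq B$, the $u^-$ coefficient is $\beta^+(B/A-1)/h^2+\mathcal O(1)$.
\item For $A=B=1$, the $u_x^-$ coefficient is
\[
\frac{\beta^-(d-h)-(\beta^-+\beta^+)d+\beta^-(d+h)}{h^2}=\frac{(\beta^--\beta^+)\,d}{h^2}=\mathcal O(h^{-1}).
\]
Your expression dropped the $-\beta^+d$ term, and in any case nothing cancels it.
\item For $A=B=1$, the $u_{xx}^-$ coefficient is $\beta^-+(\beta^--\beta^+)d^2/(2h^2)$.
\item For $A=B=1$, the $u^-$ coefficient is $(q^+-q^-)(d+h)^2/(2h^2)$.
\end{itemize}
So $\tau_j$ is generically $\mathcal O(h^{-1})$, or $\mathcal O(h^{-2})$ if $A\neq B$. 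The choice $\beta_{j+\frac12}=\beta^-$ fails the same way.

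Your success criterion was also too weak. Cancelling the $h^{-2}$ and $h^{-1}$ parts is not enough. The $u^-$ and $u_x^-$ coefficients must be $\mathcal O(h)$ and the $u_{xx}^-$ coefficient must be $\beta^-+\mathcal O(h)$. The $\mathcal O(1)$ leftover you yourself found for $A=B$ already gives $\tau_j=\mathcal O(1)$.

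\textbf{The missing idea} is that the irregular weights are unknowns fixed by a $3\times 3$ matching system. Let $c_0$ be the coefficient of $u^-$ in the formula for $u_{xx}^+$, and require
\begin{gather*}
\gamma_{j,1}+\gamma_{j,2}+\gamma_{j,3}\Bigl(\frac BA+\frac{(x_{j+1}-\alpha)^2}{2}c_0\Bigr)=0,\\
\gamma_{j,1}(x_{j-1}-\alpha)+\gamma_{j,2}(x_j-\alpha)+\gamma_{j,3}\frac{\beta^-}{\beta^+}(x_{j+1}-\alpha)=0,\\
\frac12\Bigl(\gamma_{j,1}(x_{j-1}-\alpha)^2+\gamma_{j,2}(x_j-\alpha)^2+\gamma_{j,3}\frac{B\beta^-}{A\beta^+}(x_{j+1}-\alpha)^2\Bigr)=\beta^-.
\end{gather*}
Then the coefficients of $u^-,u_x^-,u_{xx}^-$ match exactly and $C_j$ removes the inhomogeneous terms. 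What remains is $\mathcal O(h^3)$ Taylor remainders times weights of size $\mathcal O(h^{-2})$, plus $\beta^-\bigl(u_{xx}^--u_{xx}(x_j)\bigr)=\mathcal O(h)$. This is exactly the paper's argument, and the point $x_{j+1}$ is symmetric.

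To justify the weight bound, scale the weights by $h^2$ and the distances by $h$. The matrix becomes an $h$-independent matrix plus $\mathcal O(h^2)$. For $A=B$, the leading part has determinant $\tfrac12\bigl[\rho(\hat d+1)(2-\hat d)+\hat d(\hat d-1)\bigr]$, with $\hat d=d/h\in(-1,0]$ and $\rho=\beta^-/\beta^+$. This is bounded away from zero, and the same holds whenever $B/A>0$. Hence $\gamma_{j,k}=\mathcal O(h^{-2})$ uniformly in the interface position.
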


\begin{proof}
At a regular grid point the stencil is the standard centered
second-order approximation, and hence
\[
\tau_i=\mathcal O(h^2).
\]

At an irregular grid point, the Taylor expansions about the interface
are matched through the quadratic terms by the correction formulas
\eqref{eq:Cj_corrected} and \eqref{eq:Cjp1_corrected}.  The first
unmatched terms in the expansions are therefore of order
$\mathcal O(h^3)$.  Since the coefficients of the second-difference
operator are of order $\mathcal O(h^{-2})$, the remaining local
residual is of order
\[
\mathcal O(h^{-2})\,\mathcal O(h^3)
=
\mathcal O(h).
\]
The same argument applies at both irregular grid points.
\end{proof}

\begin{remark}
\label{rem:global-order}
The first-order truncation errors at the two irregular grid points do
not by themselves imply a loss of global second-order accuracy.  This
is a familiar feature of one-dimensional immersed interface
discretizations.  The irregular defects are supported at only two
neighboring grid points, whereas the discrete inverse associated with
the second-order elliptic difference operator has entries of order
$h$ in the corresponding discrete Green-function representation.
Hence an $\mathcal O(h)$ defect supported at a fixed number of grid
points produces an $\mathcal O(h^2)$ contribution to the solution
error.  The $\mathcal O(h^2)$ truncation errors at the regular grid
points likewise give an $\mathcal O(h^2)$ global contribution.
Thus the spatial discretization is consistent with second-order
global accuracy despite the lower local order at the two irregular
points.  The numerical results in Section~4 confirm this behavior.
\end{remark}
\end{revision}
\section{IIM--Newton Formulation: A Stationary Model Problem}
\label{sec:newton-elliptic}

\begin{revision}
To make the nonlinear algebraic structure transparent, we first
consider the stationary analogue of the parabolic interface problem.
The same structure arises at each time level after an implicit
discretization of the parabolic equation.  The essential issue is
that, unlike the prescribed-jump problem of Section~2, the solution
jump now depends on the unknown interface traces.  We show that this
nonlinearity can be incorporated directly into the immersed interface
correction terms, so that the nonlinear contribution remains confined
to the irregular grid points.
\end{revision}

We consider the elliptic interface problem
\begin{gather}
-(\beta u')'+qu=f,
\qquad
x\in\Omega^-\cup\Omega^+,
\label{eq:elliptic_newton1}
\\
u(a)=\xi,
\qquad
u(b)=\eta,
\label{eq:elliptic_newton2}
\\
[u]_\alpha=\lambda u^+u^-,
\label{eq:elliptic_newton3}
\\
[\beta u']_\alpha=v.
\label{eq:elliptic_newton4}
\end{gather}

The nonlinear jump condition may be rewritten as
\begin{equation}
\label{eq:jump_w_left_trace}
[u]_\alpha
=
w
=
\frac{\lambda (u^-)^2}{1-\lambda u^-}.
\end{equation}

\begin{revision}
Thus the nonlinear interface condition has the same algebraic form as
a prescribed solution jump, except that the jump value is now an
unknown function of the one-sided trace $u^-(\alpha)$.  This
observation allows the linear immersed interface construction of
Section~2 to be retained: the prescribed quantity $w$ in the
irregular-point correction terms is replaced by a nonlinear function
of the discrete solution.
\end{revision}

Let
\[
a=x_0<x_1<\cdots<x_N<x_{N+1}=b
\]
be a uniform grid with mesh size \(h\), and assume that
\[
x_I\le \alpha <x_{I+1}.
\]
The grid points \(x_I\) and \(x_{I+1}\) are the irregular points.

For a linear interface problem with prescribed jumps, the immersed
interface discretization has the form
\begin{equation}
\label{eq:linear_iim_matrix}
A_hU=F_h+C_h,
\end{equation}
where \(U=(U_1,\ldots,U_N)^T\),
\(A_h\) contains the standard finite difference coefficients together
with the modified irregular-point coefficients, and \(C_h\) contains
the immersed interface correction terms.  The vector \(C_h\) is zero
except possibly in the two entries corresponding to the irregular
grid points.

For the nonlinear problem, the correction terms depend on the
nonlinear jump \(w\).  Consequently, the discrete system becomes
\begin{equation}
\label{eq:nonlinear_iim_system}
A_hU=F_h+\Phi(U),
\end{equation}
where
\[
\Phi(U)
=
\begin{bmatrix}
0\\
\vdots\\
\Phi_I(U)\\
\Phi_{I+1}(U)\\
\vdots\\
0
\end{bmatrix}.
\]
Thus the nonlinearity appears only in the correction terms at the
irregular points.

\begin{revision}
To express the nonlinear jump in terms of the discrete unknowns, an
approximation of the one-sided trace $u^-(\alpha)$ is required.  In
the computations below, we use the three neighboring grid values on
the minus side and define the quadratic Lagrange extrapolation
\[
\widetilde U^-
=
U_{I-2}\ell_{I-2}(\alpha)
+
U_{I-1}\ell_{I-1}(\alpha)
+
U_I\ell_I(\alpha),
\]
where
\[
\ell_{I-2}(x)
=
\frac{(x-x_{I-1})(x-x_I)}
     {(x_{I-2}-x_{I-1})(x_{I-2}-x_I)},
\]
\[
\ell_{I-1}(x)
=
\frac{(x-x_{I-2})(x-x_I)}
     {(x_{I-1}-x_{I-2})(x_{I-1}-x_I)},
\]
and
\[
\ell_I(x)
=
\frac{(x-x_{I-2})(x-x_{I-1})}
     {(x_I-x_{I-2})(x_I-x_{I-1})}.
\]
For a smooth one-sided solution, this is the usual quadratic
same-side extrapolation to the interface.

Replacing $u^-$ in \eqref{eq:jump_w_left_trace} by
$\widetilde U^-$, we define the discrete nonlinear jump
\begin{equation}
\label{eq:mu_definition}
\mu(U)
=
\frac{\lambda(\widetilde U^-)^2}
     {1-\lambda\widetilde U^-}.
\end{equation}

The immersed interface correction terms are then obtained by replacing
the prescribed jump $w$ in the corresponding linear correction
formulas by $\mu(U)$.  Thus
\[
\Phi_I(U)=C_I(\mu(U)),
\qquad
\Phi_{I+1}(U)=C_{I+1}(\mu(U)).
\]

This replacement is the essential step in the IIM--Newton
formulation.  The finite-difference operator $A_h$ is unchanged from
the corresponding linear immersed interface problem; all dependence
on the nonlinear jump law is collected in the correction vector
$\Phi(U)$.  Moreover,
\[
\Phi_i(U)=0,
\qquad
i\notin\{I,I+1\},
\]
so that the nonlinear correction has fixed local support, independent
of the total number of grid points.
\end{revision}

\begin{revision}
For the stationary interface problem considered in this section, the
correction terms depend linearly on the prescribed solution jump $w$
when the remaining interface data are fixed.  We may therefore write
\[
C_I(w)=\kappa_I w+r_I,
\qquad
C_{I+1}(w)=\kappa_{I+1}w+r_{I+1},
\]
where $\kappa_I$, $\kappa_{I+1}$ and the terms $r_I,r_{I+1}$ are
determined by the corresponding linear immersed interface
discretization.  Replacing $w$ by $\mu(U)$ gives
\[
C_I(\mu(U))=\kappa_I\mu(U)+r_I,
\qquad
C_{I+1}(\mu(U))=\kappa_{I+1}\mu(U)+r_{I+1}.
\]
Thus the nonlinear dependence enters through the scalar quantity
$\mu(U)$.

If the solution jump is prescribed independently of $U$, then
$\mu(U)$ is replaced by that prescribed value and $\Phi'(U)=0$;
the formulation therefore reduces immediately to the linear immersed
interface scheme of Section~2.
\end{revision}

Define the nonlinear residual
\begin{equation}
\label{eq:newton_residual}
G(U)
=
A_hU-F_h-\Phi(U).
\end{equation}
The nonlinear algebraic problem
\eqref{eq:nonlinear_iim_system}
is therefore equivalent to
\[
G(U)=0.
\]

Newton's method is given by the iteration
\begin{equation}
\label{eq:newton_linear_system}
G'(U^{(k)})\delta U^{(k)}
=
-G(U^{(k)}),
\end{equation}
followed by the update
\begin{equation}
\label{eq:newton_update}
U^{(k+1)}
=
U^{(k)}
+
\delta U^{(k)}.
\end{equation}

Since
\[
G'(U)=A_h-\Phi'(U),
\]
\begin{revision}
the Jacobian differs from the linear immersed interface matrix only
through the localized derivative $\Phi'(U)$.  Since
$\widetilde U^-$ depends only on $U_{I-2}$, $U_{I-1}$, and $U_I$,
the Jacobian correction contains only a few nonzero entries.  Thus
the sparse bulk structure of the linear immersed interface
discretization is preserved, while the nonlinear modification remains
localized near the interface.

The derivative of $\mu(U)$ with respect to $\widetilde U^-$ is
\[
\frac{d\mu}{d\widetilde U^-}
=
\frac{
2\lambda\widetilde U^-
-\lambda^2(\widetilde U^-)^2
}{
(1-\lambda\widetilde U^-)^2
}.
\]
Consequently,
\begin{align}
\frac{\partial\mu}{\partial U_{I-2}}
&=
\frac{
2\lambda\widetilde U^-
-\lambda^2(\widetilde U^-)^2
}{
(1-\lambda\widetilde U^-)^2
}
\,\ell_{I-2}(\alpha),
\\
\frac{\partial\mu}{\partial U_{I-1}}
&=
\frac{
2\lambda\widetilde U^-
-\lambda^2(\widetilde U^-)^2
}{
(1-\lambda\widetilde U^-)^2
}
\,\ell_{I-1}(\alpha),
\\
\frac{\partial\mu}{\partial U_I}
&=
\frac{
2\lambda\widetilde U^-
-\lambda^2(\widetilde U^-)^2
}{
(1-\lambda\widetilde U^-)^2
}
\,\ell_I(\alpha).
\end{align}
\end{revision}

\begin{remark}
The quantity
\[
1-\lambda\widetilde U^-
\]
must remain nonzero during the Newton iteration.  If the denominator
becomes too small, the Newton step should be damped or restarted from
a different initial guess.  This illustrates one practical difficulty
of directly incorporating nonlinear jumps into an immersed interface
Newton iteration.
\end{remark}

\begin{revision}
The quadratic extrapolation above provides a simple way to close the
IIM--Newton algebraic system using computed grid values and is the
trace approximation used in the elliptic experiment of
Section~4.1.  More generally, one-sided interface traces may be
recovered by higher-order same-side reconstruction or by an
IIM-consistent reconstruction incorporating the jump relations.
A systematic analysis of such trace-recovery procedures is beyond
the scope of the present paper.

In the manufactured-solution verification of the $s$-parameter
method in Section~4.2, the required auxiliary interface traces are
instead evaluated from the exact auxiliary solutions.  This choice
is made deliberately to isolate the accuracy of the immersed
interface discretization and the nonlinear scalar reduction from
additional trace-reconstruction errors.
\end{revision}

\section{Numerical Experiments}
\label{sec:numerical-experiments}

In this section, we present numerical experiments illustrating the
behavior of the immersed interface methods discussed in the previous
sections.

The first example concerns a nonlinear elliptic interface problem and is
used to illustrate the behavior of the IIM--Newton iteration for a
problem possessing multiple solutions.  In particular, we demonstrate
the sensitivity of the iteration to the choice of the initial iterate.

The second example concerns a nonlinear parabolic interface problem and
illustrates the Crank--Nicolson immersed interface scheme combined with
the $s$-parameter reduction.  In this verification example, the
one-sided interface traces entering the quadratic equation for the
parameter $s$ are evaluated from the exact auxiliary solutions.  This
allows us to isolate the immersed interface discretization and the
scalar reduction from additional trace-reconstruction errors.

Throughout the experiments, the error is measured in the discrete
maximum norm
\[
\|E_h\|_\infty
=
\max_i |U_i-u(x_i)|,
\]
where $U_i$ denotes the numerical approximation and $u(x_i)$ denotes
the exact solution at the grid point $x_i$.

\subsection{A nonlinear elliptic interface problem}
\label{subsec:elliptic-numerical-example}

We first consider the nonlinear elliptic interface problem on
$(-1,1)$ with interface point $\alpha=0$:
\begin{align}
-(\beta u')'
&=
\sin(\pi x),
\qquad
-1<x<0
\quad \text{or} \quad
0<x<1,
\label{eq:elliptic_ex1_pde}
\\
[u]
&=
\lambda u^+u^-,
\label{eq:elliptic_ex1_jump_u}
\\
[\beta u']
&=
0,
\label{eq:elliptic_ex1_jump_flux}
\\
u(-1)
&=
u(1)
=
0.
\label{eq:elliptic_ex1_bc}
\end{align}

\begin{revision}
The parameters are
\[
\lambda=1.5,
\qquad
\beta^-=2,
\qquad
\beta^+=1.
\]
\end{revision}

The exact solution has the form
\begin{equation}
u(x)=
\begin{cases}
\dfrac{1}{\pi^2\beta^-}\sin(\pi x)
+
A(x+1),
& -1\le x<0,
\\[2mm]
\dfrac{1}{\pi^2\beta^+}\sin(\pi x)
+
B(x-1),
& 0<x\le 1.
\end{cases}
\label{eq:elliptic_ex1_exact_form}
\end{equation}
Let
\[
r=\frac{\beta^-}{\beta^+}.
\]
The flux continuity condition gives $B=rA$, and substitution into the
nonlinear jump condition yields
\[
\lambda rA^2=(r+1)A.
\]
Consequently, the problem possesses two exact solutions,
\[
A=0,
\qquad
A=\frac{r+1}{\lambda r}.
\]
The first is a continuous solution, denoted by $u_c$, while the second
is a discontinuous solution, denoted by $u_d$.

Because the discrete nonlinear system also possesses multiple solutions,
the convergence of Newton's method depends on the initial iterate.  In
our computations, the basin of attraction of the continuous solution
$u_c$ appears to be larger.  For example, initial iterates of the form
\[
t\,u_d(x_i),
\qquad
0\le t\le 0.67,
\]
converge to the continuous solution $u_c$, while larger values of $t$
may lead to convergence toward the discontinuous solution $u_d$.

Figures~\ref{fig:elliptic-discontinuous-solution} and
\ref{fig:elliptic-continuous-solution} show the two exact solutions
together with their immersed interface approximations.

\begin{figure}[H]
\centering
\includegraphics[width=3.5in]{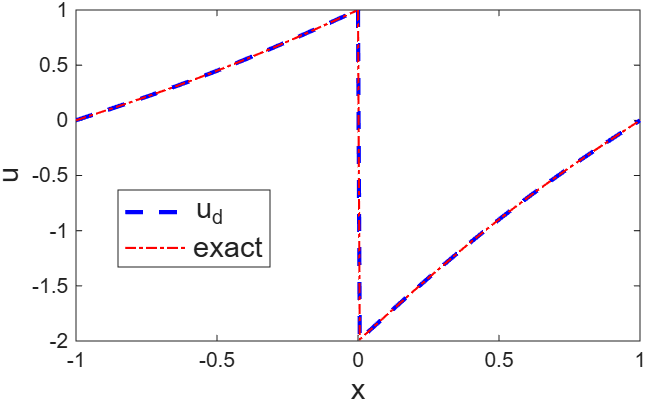}
\caption{Discontinuous solution $u_d$ and its immersed interface
approximation.}
\label{fig:elliptic-discontinuous-solution}
\end{figure}

\begin{figure}[H]
\centering
\includegraphics[width=3.5in]{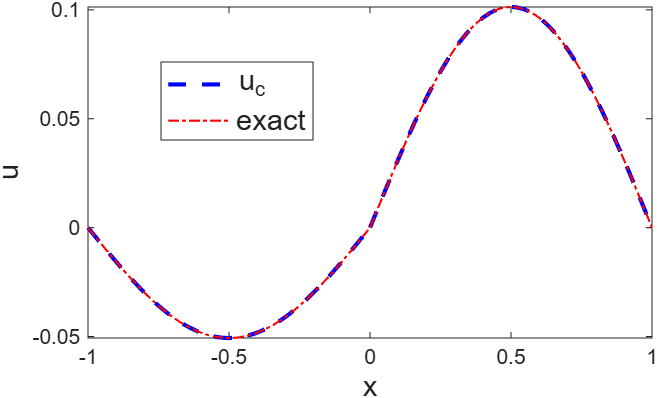}
\caption{Continuous solution $u_c$ and its immersed interface
approximation.}
\label{fig:elliptic-continuous-solution}
\end{figure}

\begin{revision}
Since the mesh size is halved successively, the observed order is
computed by
\[
p_h=\frac{\log(E_h/E_{h/2})}{\log 2}.
\]
Table~\ref{tab:elliptic_ex1_error} reports the $L^\infty$-errors
for the discontinuous solution.  The results show second-order
convergence.
\end{revision}

\begin{table}[H]
\centering
\caption{$L^\infty$-errors and observed orders for the nonlinear
elliptic example.}
\label{tab:elliptic_ex1_error}
\vskip 0.1in
\begin{tabular}{|c|c|c|}
\hline
$h$ & $L^\infty$-error & Observed order \\
\hline
0.05     & $2.0859\times 10^{-4}$ & -- \\
\hline
0.025    & $5.2099\times 10^{-5}$ & 2.001 \\
\hline
0.0125   & $1.3022\times 10^{-5}$ & 2.001 \\
\hline
0.00625  & $3.2553\times 10^{-6}$ & 2.000 \\
\hline
0.003125 & $8.1381\times 10^{-7}$ & 2.000 \\
\hline
\end{tabular}
\end{table}

\begin{revision}
\begin{revision}
To examine the behavior for a substantially larger diffusion contrast,
we repeated the computation, keeping $\lambda=1.5$, with
\[
\beta^-=1,\qquad \beta^+=250,
\]
so that $\beta^+/\beta^-=250$.
\end{revision}The errors and observed orders are
reported in Table~\ref{tab:elliptic_high_contrast}.  The method again
exhibits second-order convergence in the maximum norm.
\end{revision}

\begin{table}[H]
\centering
\caption{$L^\infty$-errors and observed orders for the nonlinear
elliptic problem with $\beta^+/\beta^-=250$.}
\label{tab:elliptic_high_contrast}
\vskip 0.1in
\begin{tabular}{|c|c|c|}
\hline
$h$ & $L^\infty$-error & Observed order \\
\hline
0.05    & $2.0859\times10^{-4}$ & -- \\
\hline
0.025   & $5.2099\times10^{-5}$ & 2.001 \\
\hline
0.0125  & $1.3022\times10^{-5}$ & 2.000 \\
\hline
0.00625 & $3.2556\times10^{-6}$ & 2.000 \\
\hline
\end{tabular}
\end{table}

\subsection{A nonlinear parabolic interface problem}
\label{subsec:parabolic-numerical-example}

We next consider the nonlinear parabolic interface problem
\begin{align}
u_t-(\beta u_x)_x
&=
f(x,t),
\qquad
-1<x<0
\quad \text{or} \quad
0<x<1,
\qquad
t\in(0,T],
\label{eq:parabolic_ex_pde}
\\
u(-1,t)
&=
0,
\qquad
u(1,t)
=
\frac1{1-\lambda},
\label{eq:parabolic_ex_bc}
\\
[u]
&=
\lambda u^+u^-,
\label{eq:parabolic_ex_jump_u}
\\
[\beta u_x]
&=
0.
\label{eq:parabolic_ex_jump_flux}
\end{align}

The parameters in the first set of computations are
\[
\lambda=-2.5,
\qquad
\beta^-=1,
\qquad
\beta^+=0.1.
\]
The source term is chosen so that the exact solution is
\begin{equation}
u(x,t)=
\begin{cases}
e^{-t}\sin(\pi x)+x+1,
& -1\le x<0,
\\[2mm]
\dfrac1{1-\lambda}
+
\dfrac{\beta^-}{\beta^+}
(\pi e^{-t}+1)(x-x^2),
& 0<x\le 1.
\end{cases}
\label{eq:parabolic_exact_solution}
\end{equation}

The solution is represented by the $s$-parameter decomposition
\[
u(x,t)=u_0(x,t)+s(t)u_1(x,t),
\]
where $u_0$ satisfies the linear auxiliary problem with zero solution
jump, while $u_1$ satisfies the corresponding homogeneous problem
with unit solution jump and zero flux jump.  The scalar parameter
$s(t)$ is determined from the nonlinear jump condition.

At each time level, $s(t)$ satisfies
\[
s(t)
=
\lambda
\bigl(u_0^+(t)+s(t)u_1^+(t)\bigr)
\bigl(u_0^-(t)+s(t)u_1^-(t)\bigr).
\]
Equivalently,
\[
a_2s^2+a_1s+a_0=0,
\]
where
\[
a_2=\lambda u_1^+u_1^-,
\qquad
a_1=
\lambda(u_0^-u_1^+ + u_0^+u_1^-)-1,
\qquad
a_0=\lambda u_0^+u_0^-.
\]

\begin{revision}
The scalar quadratic equation may admit two distinct real roots, a
repeated root, or no real root.  As discussed in our earlier work
\cite{Chou2025}, the nonlinear interface condition can therefore give
rise to multiple solution branches.  In a time-dependent computation,
a natural way to follow a prescribed branch is by continuation.  The
initial root is selected consistently with the initial data, and at
each subsequent time level the admissible real root closest to the
value of $s$ at the preceding time level is selected.  This criterion
avoids unintended switching between solution branches.

In the manufactured-solution experiment reported here, the desired
solution branch is known explicitly, and the root corresponding to
that branch is selected.  If the discriminant vanishes, the two roots
coincide.  If the discriminant becomes negative beyond roundoff
tolerance, the discrete scalar equation has no real root and the
continuation procedure must be reconsidered.
\end{revision}

\begin{revision}
In the present manufactured-solution experiment, the one-sided
auxiliary traces entering the scalar equation for $s$ are evaluated
from the exact auxiliary solutions.  This choice is deliberate: it
allows us to isolate the accuracy of the immersed interface
discretization and the nonlinear scalar reduction from additional
trace-reconstruction errors.  In a general computation, the required
one-sided traces can instead be recovered from nearby numerical
values, for example by one-sided extrapolation or by an
IIM-consistent reconstruction incorporating the interface jump
relations.  A detailed study of numerical trace recovery is beyond
the scope of the present paper.
\end{revision}

\begin{revision}
For the computations reported below, the Crank--Nicolson method
($\theta=1/2$) is used.  For each spatial mesh, the number of time
steps is chosen as
\[
M=\operatorname{round}(T/h),
\qquad
\Delta t=\frac{T}{M}.
\]
Thus $\Delta t$ is comparable to $h$, while the final time is reached
exactly, $t_M=T$.  The reported error is the discrete maximum-norm
error at $t=T$.  For two successive meshes with sizes $h_1$ and
$h_2$, the observed order is computed by
\[
p=
\frac{\log(E_{h_1}/E_{h_2})}
     {\log(h_1/h_2)}.
\]
\end{revision}

For $T=7$, Figures~\ref{fig:parabolic-u0-comparison} and
\ref{fig:parabolic-u1-comparison} compare the numerical approximations
of $u_0$ and $u_1$ with their exact solutions.

\begin{figure}[H]
\centering
\includegraphics[width=0.5\linewidth]{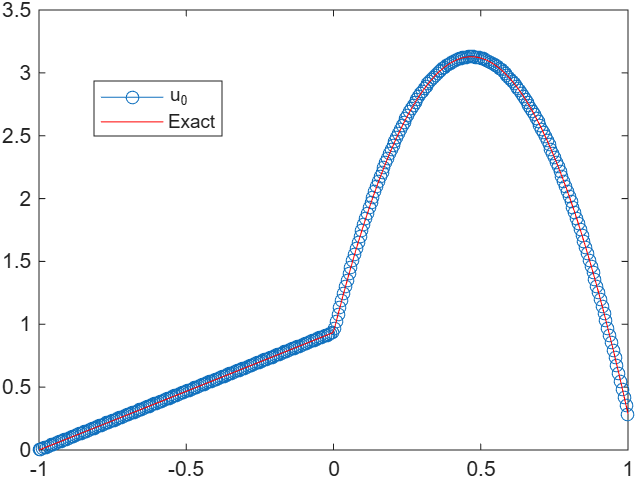}
\caption{Comparison between the numerical and exact solutions for
$u_0$ at $T=7$.}
\label{fig:parabolic-u0-comparison}
\end{figure}

\begin{figure}[H]
\centering
\includegraphics[width=0.5\linewidth]{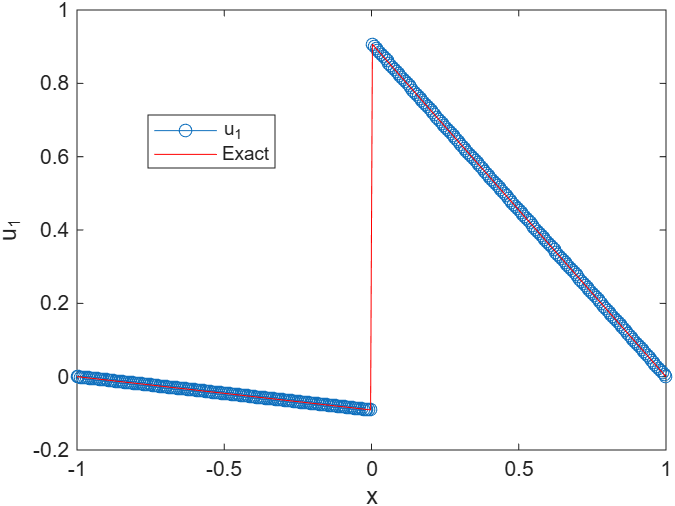}
\caption{Comparison between the numerical and exact solutions for
$u_1$ at $T=7$.}
\label{fig:parabolic-u1-comparison}
\end{figure}

After computing $u_0$, $u_1$, and $s(t)$, the nonlinear solution is
reconstructed as
\[
u(x,t)=u_0(x,t)+s(t)u_1(x,t).
\]
Figure~\ref{fig:parabolic-solution-comparison} compares the
reconstructed solution with the exact nonlinear solution.

\begin{figure}[H]
\centering
\includegraphics[width=0.5\linewidth]{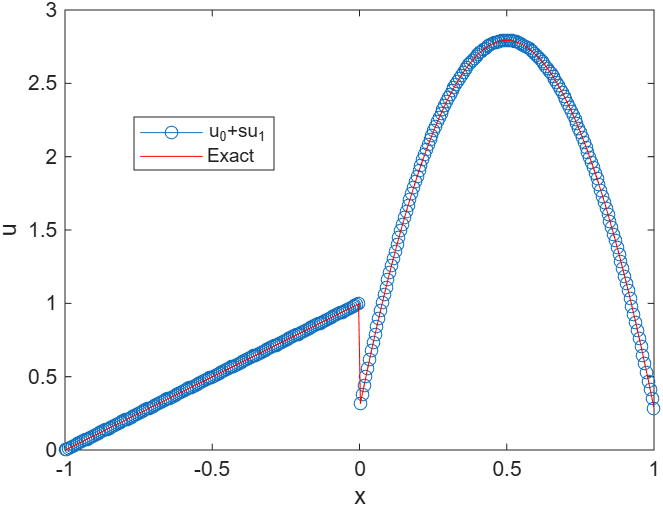}
\caption{Comparison between the reconstructed numerical solution and
the exact nonlinear solution at $T=7$.}
\label{fig:parabolic-solution-comparison}
\end{figure}

\begin{revision}
Table~\ref{tab:parabolic_infinity_error} reports the errors at $T=7$.
The observed orders approach two very closely.  In this manufactured
experiment, the scalar parameter $s$ is recovered to essentially
machine precision from the exact auxiliary traces.  Thus the reported
error primarily reflects the Crank--Nicolson immersed interface
discretization of the auxiliary problems.
\end{revision}

\begin{table}[H]
\centering
\caption{$L^\infty$-errors and observed orders for the nonlinear
parabolic problem at $T=7$, with $\beta^-=1$ and $\beta^+=0.1$.}
\label{tab:parabolic_infinity_error}
\vskip 0.1in
\begin{tabular}{|c|c|c|}
\hline
Mesh size $h$ & $L^\infty$-error & Observed order \\
\hline
0.048780  & $1.1890\times10^{-4}$ & -- \\
\hline
0.024691  & $3.0603\times10^{-5}$ & 1.993 \\
\hline
0.012422  & $7.7658\times10^{-6}$ & 1.996 \\
\hline
0.0062305 & $1.9564\times10^{-6}$ & 1.998 \\
\hline
\end{tabular}
\end{table}

\begin{revision}
Since the transient component of the solution has largely decayed by
$T=7$, we also repeated the computation at the shorter final time
$T=0.1$.  In addition, to examine the effect of a large diffusion
contrast, we considered
\[
\beta^-=1,
\qquad
\beta^+=250,
\]
for which $\beta^+/\beta^-=250$.
Table~\ref{tab:parabolic_short_contrast} reports the results at
$T=0.1$ for both coefficient choices.  In each case the observed
convergence rate approaches two.
\end{revision}

\begin{table}[H]
\centering
\caption{$L^\infty$-errors and observed orders at $T=0.1$ for two
diffusion contrasts.}
\label{tab:parabolic_short_contrast}
\vskip 0.1in
\begin{tabular}{|c|cc|cc|}
\hline
& \multicolumn{2}{c|}{$\beta^-=1,\ \beta^+=0.1$}
& \multicolumn{2}{c|}{$\beta^-=1,\ \beta^+=250$} \\
\hline
$h$ & Error & Order & Error & Order \\
\hline
0.048780
& $1.7583\times10^{-3}$ & --
& $1.0991\times10^{-3}$ & -- \\
\hline
0.024691
& $4.6349\times10^{-4}$ & 1.958
& $2.8515\times10^{-4}$ & 1.982 \\
\hline
0.012422
& $1.1919\times10^{-4}$ & 1.977
& $7.2885\times10^{-5}$ & 1.986 \\
\hline
0.0062305
& $3.0220\times10^{-5}$ & 1.989
& $1.8437\times10^{-5}$ & 1.992 \\
\hline
\end{tabular}
\end{table}

\begin{revision}
Figure~\ref{fig:parabolic-T01} compares the reconstructed numerical
solution with the exact solution at the shorter final time $T=0.1$.
At this time the transient component remains significant, and close
agreement is observed on both sides of the interface.
\end{revision}
\begin{figure}[H]
\centering
\includegraphics[width=0.58\linewidth]{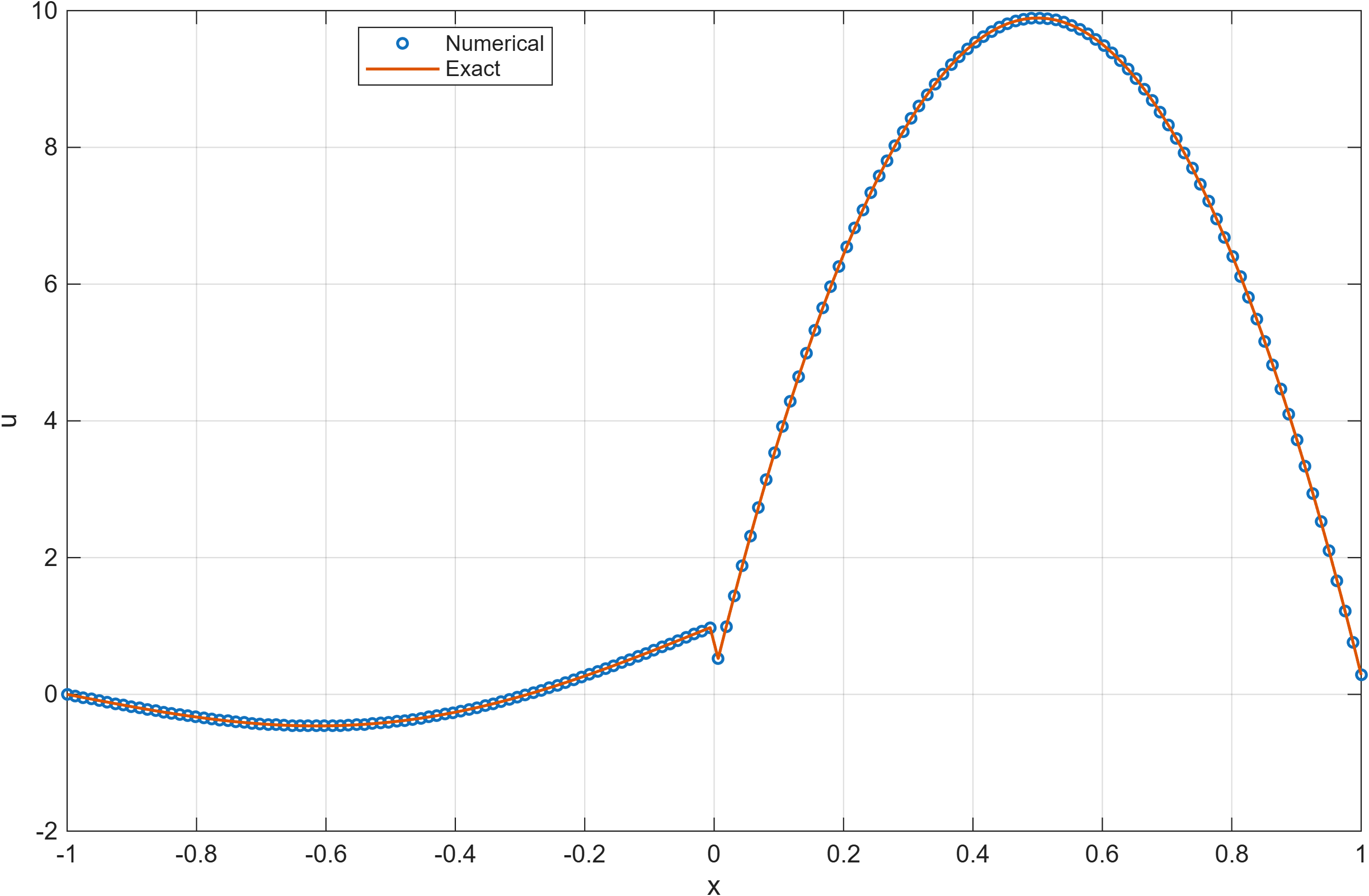}
\caption{Comparison between the reconstructed numerical solution and
the exact nonlinear solution at $T=0.1$, with
$\beta^-=1$ and $\beta^+=0.1$.}
\label{fig:parabolic-T01}
\end{figure}

\begin{revision}
The numerical experiments illustrate two distinct features of the
proposed formulations.  The elliptic example demonstrates the
behavior of the IIM--Newton method in the presence of multiple
solutions, while the parabolic example verifies the second-order
behavior of the Crank--Nicolson IIM combined with the $s$-parameter
reduction.  The additional computations show that the observed
second-order convergence persists at short times and for diffusion
contrasts as large as $\beta^+/\beta^-=250$.
\end{revision}

\section{Remarks on the Newton--IIM Formulation}
\label{sec:newton-iim-not-implementable}

The nonlinear difficulty in \({\bf NPIP}\) is localized at the interface
condition
\[
[u]_\alpha=\lambda u^+u^-.
\]
Since the differential equation is linear away from the interface, it is
natural to ask whether one may first linearize the nonlinear interface
problem by Newton's method at the continuous level and then discretize
the resulting correction equation by the immersed interface method.
We refer to this viewpoint as the Newton--IIM approach.

Let
\[
\mathcal P u:=u_t-(\beta u_x)_x+qu.
\]
Formally, Newton's method seeks a correction \(v\) satisfying
\[
F'(u)v=-F(u),
\]
where the nonlinear operator \(F(u)\) contains both the differential
equation and the nonlinear interface condition.  The corresponding
linearized interface condition takes the form
\[
[v]_\alpha
-
\lambda(u^+v^-+u^-v^+)
=
-[u]_\alpha+\lambda u^+u^-.
\]
Thus the Newton correction problem becomes a linear parabolic interface
problem whose coefficients and interface data depend on the current
iterate \(u\).

At first sight, this appears compatible with the immersed interface
framework.  However, the right-hand side of the correction equation
contains the residual
\[
-(\mathcal Pu-f),
\]
which involves derivatives of the current iterate \(u\).
In practice, the iterate is available only through discrete grid values,
and near the interface these derivatives are influenced by the jump
conditions.  Consequently, evaluating the residual consistently would
require additional differentiation or extrapolation of the numerical
iterate near the interface.

For this reason, the continuous Newton--IIM formulation is less natural
within the standard immersed interface framework than the IIM--Newton
approach considered in the previous section, where the immersed
interface discretization is constructed first and Newton's method is
applied directly to the resulting nonlinear algebraic system.

\bibliographystyle{siam}
\bibliography{references}
 	
\end{document}